\DeclareOldFontCommand{\rm}{\normalfont\rmfamily}{\mathrm}
\DeclareOldFontCommand{\sf}{\normalfont\sffamily}{\mathsf}
\DeclareOldFontCommand{\tt}{\normalfont\ttfamily}{\mathtt}
\DeclareOldFontCommand{\bf}{\normalfont\bfseries}{\mathbf}
\DeclareOldFontCommand{\it}{\normalfont\itshape}{\mathit}
\DeclareOldFontCommand{\sl}{\normalfont\slshape}{\@nomath\sl}
\DeclareOldFontCommand{\sc}{\normalfont\scshape}{\@nomath\sc}
\numberwithin{equation}{section} 
\numberwithin{figure}{section} 
\numberwithin{table}{section} 
\def\Ddots{\mathinner{\mkern1mu\raise\p@
\vbox{\kern7\p@\hbox{.}}\mkern2mu
\raise4\p@\hbox{.}\mkern2mu\raise7\p@\hbox{.}\mkern1mu}}
\newcommand{\Sym}{\mathrm{Sym}}
\newcommand{\diag}{\mathrm{diag}}
\newcommand{\Hall}{\mathrm{Hall}}
\title{	Intersection of Solvable Hall subgroups in finite groups
\normalfont \normalsize
}
\author{Anton A. Baykalov, E.P. Vdovin, V.I. Zenkov} 
\date{\today}
\newtheorem{Th}{Theorem}
\newtheorem{Lem}{Lemma}[section]
\theoremstyle{definition}
\newtheorem{Rem}[Lem]{Remark}
\begin{document}

\maketitle


\section*{Introduction}

Throughout the paper the term ``group'' we always use in the meaning ``finite group''.  We use symbols $A\leq G$ and
$A\unlhd G$ if $A$ is a subgroup of $G$ and  $A$ is a normal subgroup of $G$ respectively.
Given $H\leq G$ by
$H_G=\cap_{g\in G} H^g$ we denote the {\em kernel} of~$H$.

Assume that $G$ acts on $\Omega$. An element $x\in
\Omega$ is called a {\em $G$-regular point}, if $\vert
xG\vert=\vert G\vert$, i.e., if the stabilizer of $x$ is trivial. We define the action of $G$ on $\Omega^k$
by
\begin{equation*}
g:(i_1,\ldots,i_k)\mapsto
(i_1g,\ldots,i_kg).
\end{equation*}
If $G$ acts faithfully and transitively on $\Omega$, then the minimal $k$ such that
$\Omega^k$ possesses a $G$-regular point is called the {\em base size} of $G$ and is denoted by~$\mathrm{Base}(G)$.
For every natural $m$ the number of $G$-regular orbits on  $\Omega^m$ is denoted by
$\mathrm{Reg}(G,m)$ (this number equals $0$ if $m<\mathrm{Base}(G)$). If $H$ is a subgroup of $G$ and $G$
acts on the set $\Omega$ of right cosets of $H$ by right multiplications, then
$G/H_G$ acts faithfully and transitively on $\Omega$. In this case we denote  $\mathrm{Base}(G/H_G)$
and $\mathrm{Reg}(G/H_G,m)$ by $\mathrm{Base}_H(G)$ and $\mathrm{Reg}_H(G,m)$ respectively. We also say that
$\mathrm{Base}_H(G)$ is the {\em base size of}  $G$ {\em with respect to}~$H$. Clearly,
$\mathrm{Base}_H(G)$ is the minimal $k$ such that there exist elements $x_1,\ldots,x_k\in G$ with
$H^{x_1}\cap \ldots\cap H^{x_k}=H_G$. Thus, the base size of $G$ with respect to $H$ is the minimal $k$ such that there exist $k$ conjugates of $H$ with intersection equals~$H_G$.

The following results were obtained in this direction. In 1966 D.S.Passman proved (see \cite{Pass}) that a
$p$-solvable group possesses three Sylow $p$-subgroups whose intersection equals the $p$-radical of $G$. Later in 1996 V.I.Zenkov proved (see \cite{Zen1}) that the same conclusion holds for arbitrary finite group $G$. In \cite{Dolfi} S.Dolfi proved that in every $\pi$-solvable group $G$
there exist three conjugate $\pi$-Hall subgroups whose intersection equals $O_\pi(G)$ (see also~\cite{VdovinIntersSolv}). Notice also that
V.I.Zenkov in \cite{Zen2} constructed an example of a group $G$ possessing a solvable $\pi$-Hall subgroup
$H$ such that the intersection of five conjugates of $H$ equals $O_\pi(G)$, while the intersection of every four conjugates of  $H$
is greater than~$O_\pi(G)$.

In \cite{Zen2} It was conjuctured that if $H$ is a solvable Hall $\pi$-subgroup of a finite group $G$, then $\mathrm{Base}_H(G)\leq 5$. The following theorem allows to reduce the conjecture to the case of almost simple groups.

\begin{Th}{\em \cite[Theorem~1]{VdovinZenkov}}
Let $G$ be a finite group possessing a solvable $\pi$-Hall subgroup $H$. Assume that for every simple component $S$ of $E(\overline{G})$
of the factor group $\overline{G}=G/S(G)$, where $S(G)$ is the solvable radical of $G$, the following condition holds:
\begin{multline*}
\text{for every  } L\text{ such that } S\leq L\leq \mathrm{Aut}(S)\text{ and contains a solvable }\pi-\text{Hall subgroup
}M,\\ \text{ the inequalities }\mathrm{Base}_M(L)\le 5\text{ and }\mathrm{Reg}_M(L,5)\ge5\text{ hold.}
\end{multline*}
Then $\mathrm{Base}_H(G)\le 5$ and $\mathrm{Reg}_H(G,5)\ge5$.
\end{Th}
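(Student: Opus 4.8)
The plan is to first reduce the statement to the case of a group with trivial solvable radical, and then to analyse the action of such a group on the components of its generalized Fitting subgroup.

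\textbf{Reduction modulo the solvable radical.} First I would pass from $G$ to $\overline{G}=G/S(G)$ and $\overline{H}=HS(G)/S(G)$. The intersection of conjugates of $H$ is governed, modulo $S(G)$, by the intersection of conjugates of $\overline{H}$: since $H\cap S(G)$ is a solvable $\pi$-Hall subgroup of $S(G)$ on which $H$ acts, a Frattini argument together with conjugacy of $\pi$-Hall subgroups in the solvable group $S(G)$ yields $\mathrm{Base}_H(G)=\mathrm{Base}_{\overline{H}}(\overline{G})$ and $\mathrm{Reg}_H(G,5)\geq\mathrm{Reg}_{\overline{H}}(\overline{G},5)$. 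Thus it suffices to prove the conclusion for $\overline{G}$. In $\overline{G}$ the solvable radical is trivial, so $\overline{H}_{\overline{G}}$, being a solvable normal subgroup, is trivial; hence the action of $\overline{G}$ on the cosets of $\overline{H}$ is faithful, and the task becomes to exhibit five conjugates of $\overline{H}$ with trivial intersection lying in at least five distinct regular orbits.

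\textbf{Structure of $\overline{G}$.} Since $F(\overline{G})=1$, we have $F^*(\overline{G})=E(\overline{G})=S_1\times\cdots\times S_n$, a direct product of the nonabelian simple components, and $C_{\overline{G}}(E(\overline{G}))=1$, so $\overline{G}$ embeds into $\mathrm{Aut}(E(\overline{G}))$, a product over isomorphism types of wreath products $\mathrm{Aut}(S)\wr\Sym(\,\cdot\,)$. The intersection $\overline{H}\cap E(\overline{G})$ is a solvable $\pi$-Hall subgroup of $E(\overline{G})$, hence (by the index argument for Hall subgroups of a direct product) a direct product $Q_1\times\cdots\times Q_n$ of solvable $\pi$-Hall subgroups $Q_i\leq S_i$. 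The conjugation action of $\overline{G}$ permutes the set $\{S_1,\ldots,S_n\}$; decomposing into $\overline{G}$-orbits and passing to the stabilizer of a component in each orbit produces, for every such orbit, an almost simple group $L$ with $S\leq L\leq\mathrm{Aut}(S)$ containing a solvable $\pi$-Hall subgroup $M$ induced by $\overline{H}$.

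\textbf{Applying the hypothesis and combining.} To each such $L$ I would apply the hypothesis, obtaining $\mathrm{Base}_M(L)\leq5$ and $\mathrm{Reg}_M(L,5)\geq5$, that is, $M$-configurations in $L$ realising at least five distinct regular orbits. These must be assembled into five conjugates of $\overline{H}$ in $\overline{G}$ with trivial intersection. Within a single $\overline{G}$-orbit of components there are two contributions: the component-wise (``diagonal'') one, controlled directly by $\mathrm{Reg}_M(L,5)\geq5$, and the permutation of the components by the image of $\overline{G}$ in the relevant symmetric group. The strength of the hypothesis $\mathrm{Reg}_M(L,5)\geq5$ (rather than merely $\geq1$) is exactly what supplies five distinct regular orbits, to be used as distinct labels distributed across the permuted components so that the chosen configuration admits no nontrivial component-permuting symmetry; crucially, five regular orbits per block again yield at least five regular orbits for the block, so the estimate is self-reproducing. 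An induction on $n$ assembling the orbit-blocks, via a product- and wreath-type regular-orbit lemma, then gives $\mathrm{Base}_{\overline{H}}(\overline{G})\leq5$ and $\mathrm{Reg}_{\overline{H}}(\overline{G},5)\geq5$.

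\textbf{Main obstacle.} The principal difficulty is the combination step: controlling the permutation action of $\overline{G}$ on the components while keeping the number of regular orbits at least five. The numerical coincidence that ``five regular orbits'' is simultaneously the hypothesis imposed on the almost simple groups and the conclusion for $\overline{G}$ is precisely what makes the induction close, and the delicate point is verifying that at least five labels always suffice to destroy every component-permuting symmetry, in particular for short component-orbits and for the interaction between several distinct isomorphism types of components. By contrast, the reduction modulo $S(G)$ is comparatively routine, resting on Hall-subgroup conjugacy in solvable groups.
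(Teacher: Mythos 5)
First, note that this paper does not prove the statement at all: it is quoted verbatim from \cite{VdovinZenkov}, so your proposal can only be measured against the known proof there, which is a substantial induction on $|G|$, not a short argument.

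Your second stage (trivial solvable radical, $F^*(\overline{G})=E(\overline{G})$, embedding into products of wreath products, using the five regular orbits as ``labels'' to destroy component-permuting symmetries, and the self-reproducing nature of the bound $5$) is indeed the philosophy of the actual proof, although the ``product- and wreath-type regular-orbit lemma'' you invoke is precisely the technical core and is left unproved. The genuine gap is in your first stage. The claimed equality $\mathrm{Base}_H(G)=\mathrm{Base}_{\overline{H}}(\overline{G})$, derived from a ``routine'' Frattini-plus-conjugacy argument, is false. Take any solvable group $G$ in which a Hall $\pi$-subgroup $H$ is not normal, e.g.\ $G=\mathrm{Sym}(3)$ with $\pi=\{2\}$: then $S(G)=G$, so $\overline{G}=1$ and $\mathrm{Base}_{\overline{H}}(\overline{G})\le 1$, while $\mathrm{Base}_H(G)\ge 2$ because a single conjugate of a non-normal $H$ never equals $H_G$. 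What your reduction actually yields is only this: if five conjugates of $\overline{H}$ intersect trivially in $\overline{G}$, then for any lifts $x_i$ the intersection $H^{x_1}\cap\dots\cap H^{x_5}$ is a $\pi$-subgroup of $S(G)$, which can still be strictly larger than $H_G=O_\pi(G)$. Forcing it down to $O_\pi(G)$ \emph{with the same five conjugates} is the hard content of the theorem: one must interleave a Passman--Dolfi type argument inside the solvable normal subgroup (three conjugates reach $O_\pi$ in a $\pi$-solvable group) with the choice of a regular point in the quotient, adjusting the lifts $x_i$ by elements of $S(G)$; the freedom needed for this adjustment is exactly what the hypothesis $\mathrm{Reg}_M(L,5)\ge 5$ (rather than $\ge 1$) supplies in the induction. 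Treating the radical and the quotient independently, as you propose, would at best give a bound of the shape $3+5$ or $3\cdot 5$ conjugates, not $5$. The same defect infects your inequality $\mathrm{Reg}_H(G,5)\ge\mathrm{Reg}_{\overline{H}}(\overline{G},5)$: a tuple of cosets of $H$ lying over a regular tuple of cosets of $\overline{H}$ has stabilizer $\bigcap_i H^{y_i}$, and showing this is trivial modulo $H_G$ is again the whole problem, not a consequence of Hall conjugacy in $S(G)$. So the part of the argument you set aside as ``comparatively routine'' is precisely where the theorem lives, and as written it breaks at the very first step.
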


Later in \cite[Theorem~2]{Vdo2} it was shown that the inequality $\mathrm{Reg}_H(G,5)\geq 5$ holds if $H$ is a solvable Hall $\pi$-subgroup of an almost simple group $G$, whose socle is either alternating, or sporadic, or an exceptional group of Lie type.

We prove the following theorem in the  paper.

\begin{Th}\label{Classical-main}
 Let $S$ be a simple classical group and  $G$ is chosen so  that  $S\leq G\leq \widehat{S}$, where $S$ is a group of inner-diagonal automorphisms of $S$. Assume also that $G$ possesses a solvable Hall subgroup $H$. Then  $\mathrm{Reg}_H(G,5)\le 5$.
\end{Th}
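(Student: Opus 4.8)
The plan is to translate the statement into a question about the faithful permutation action of $G$ on $\Omega=\{Hg:g\in G\}$ and to determine the number $\mathrm{Reg}_H(G,5)$ of regular orbits of $G$ on $\Omega^5$ by the fixed-point-ratio method, comparing the outcome with the value $5$ asserted in Theorem~\ref{Classical-main}. First I would record that the action is faithful: since $S$ is the unique minimal normal subgroup of $G$ and cannot lie in the solvable subgroup $H$, the kernel $H_G$ meets $S$ trivially, hence centralises $S$, and as $G\le\widehat S$ has trivial centraliser of $S$ we obtain $H_G=1$ and $G/H_G=G$. Next I would invoke the classification of solvable Hall subgroups of finite simple classical groups (Gross; Revin--Vdovin): the mere existence of a solvable Hall subgroup $H$ in $G$ with $S\le G\le\widehat S$ is very restrictive and confines $H$, up to conjugacy, to a short explicit list (Borel subgroups, normalisers of maximal tori, and a few further parabolic or imprimitive types), with precise constraints on the characteristic $p$, the Lie rank, and $q$. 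This pins down $\Omega$ as a concrete $G$-set of known index $|G:H|$.

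With $\Omega$ fixed, I would write the exact counting identity for regular tuples. Putting $f(g)=|\mathrm{fix}_\Omega(g)|$ and $\fpr(g)=f(g)/|\Omega|$, a $5$-tuple is regular precisely when no nontrivial element fixes all of its entries, so that
\begin{equation*}
\mathrm{Reg}_H(G,5)=\frac{1}{|G|}\,\#\Big\{(\omega_1,\dots,\omega_5)\in\Omega^5:\textstyle\bigcap_{i=1}^5 G_{\omega_i}=1\Big\},
\end{equation*}
and the deviation of the right-hand side from the leading term $|\Omega|^5/|G|$ is governed by the class sum $\Sigma_5=\sum_{C}|C|\,\fpr(C)^5$, taken over the nontrivial conjugacy classes $C$ of $G$. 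The entire problem therefore reduces to estimating $\Sigma_5$ sharply enough to pin down $\mathrm{Reg}_H(G,5)$ and read off the assertion of Theorem~\ref{Classical-main}.

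The analytic core is the control of the fixed-point ratios $\fpr(C)$. Here I would use the uniform bounds of Liebeck--Shalev and Burness for classical groups, of the shape $\fpr(g)\le c\,|g^G|^{-1/2}$ away from a bounded family of small-support elements, applied class by class; the few dominant classes --- long-root elements and elements with large fixed subspaces --- must be evaluated exactly rather than merely bounded, since they control the size of $\Sigma_5$. Feeding these estimates, together with the explicit value of $|G:H|$ from the Hall-subgroup list, into the identity above then determines $\mathrm{Reg}_H(G,5)$ with the precision the theorem requires.

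The main obstacle, as always in base-size arguments, will be the small cases: low Lie rank (such as $\mathrm{PSL}_2(q)$, $\mathrm{PSL}_3(q)$, $\mathrm{PSp}_4(q)$, and $\mathrm{PSU}_n(q)$ for small $n$) and small $q$, where the asymptotic fixed-point-ratio bounds are too crude and the dominant classes are not yet negligible. For these I would resort to direct computation --- explicit conjugacy-class and subgroup data, character-theoretic evaluation of $f(g)$, and computer algebra in \textsf{GAP} or \textsf{Magma} --- handling each residual configuration individually. Particular care is needed for the proper diagonal extensions $S<G\le\widehat S$, where $H$ need not be a Borel subgroup and both $|\Omega|$ and the stabiliser structure change. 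Combining the generic estimate with this finite list of exceptional cases completes the verification of the bound in Theorem~\ref{Classical-main}.
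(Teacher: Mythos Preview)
Your probabilistic plan via fixed-point-ratio estimates is a legitimate base-size strategy, but it is not the route the paper takes, and it misses a structural shortcut that makes the paper's argument much shorter.

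The paper's proof is almost entirely reductive. After disposing of the defining-characteristic case $p\in\pi$ by citation, it splits according to whether $2\in\pi$ and whether $3\in\pi$, and in each case uses the structure theory of solvable Hall subgroups (Gross; Vdovin--Revin) not merely to \emph{list} $H$ but to exhibit a specific maximal subgroup $M$ of $G$ with $H\le M$: almost always a $\mathcal{C}_2$-imprimitive subgroup stabilising an orthogonal decomposition $V=V_1\perp\cdots\perp V_k\perp W$ into $1$- or $2$-dimensional blocks. Since $H\le M$, four conjugates of $M$ with trivial common intersection immediately yield four conjugates of $H$ with the same property. At that point the paper quotes Burness's base-size theorem for primitive non-standard actions of classical groups as a black box, obtaining $\mathrm{Base}_M(G)\le 4$ and hence $\mathrm{Base}_H(G)\le 4$, which already forces $\mathrm{Reg}_H(G,5)\ge|G:H|\ge 5$. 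The residual work is a short list of exceptions: the ``standard'' case $GSp_4(q)$ with a $2+2$ decomposition, settled by an explicit four-matrix computation (Lemma~\ref{sp4}); the cases where $H$ is abelian, settled by Zenkov's theorem that $A\cap A^x\le Z(G)$ for some $x$; and a few odd-dimensional or orthogonal configurations where an explicit permutation matrix $x$ is written down so that $H\cap H^x$ becomes diagonal, hence abelian, and Zenkov applies again.

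Your plan, by contrast, applies the Liebeck--Shalev/Burness $\fpr$ machinery directly to the action on cosets of $H$. The obstacle is that those bounds are stated and proved for \emph{primitive} actions, i.e.\ for maximal point stabilisers, and the solvable Hall subgroups here are essentially never maximal. You can of course bound $\fpr_{G/H}(g)\le\fpr_{G/M}(g)$ for any maximal $M\ge H$, but once you have identified such an $M$ you are one citation away from the conclusion and the class-by-class $\fpr$ analysis is redundant: you would in effect be re-deriving the relevant instance of Burness's theorem rather than quoting it. And you would still be left with the same small-rank tail, which the paper handles by short explicit constructions rather than by the machine computations you anticipate.
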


 In view of \cite[Theorem~3]{Vdo2}, if $G$ is a classical group over a field of characteristic $p$ and $H$ is a Hall $\pi$-subgroup of $G$ with $p\in\pi$, then $\mathrm{Reg}_H(G,5)\leq 5$, i.e. Theorem \ref{Classical-main} holds in this case. So we need to prove Theorem \ref{Classical-main} in case $p\not\in\pi$, and we assume that $p\not\in\pi$ below.

\section{ Preliminaries }
Let $\overline{G}$ be a connected reductive algebraic group over algebraically closed field $\overline{\mathbb{F}}_p$ of positive characteristic $p$ and let $\sigma: \overline{G} \to \overline{G}$ be a Frobenius morphism.
If $\overline{H}$ is a $\sigma$-stable subgroup of $\overline{G}$ (so $(\overline{H})^{\sigma}=\overline{H}$), then $\overline{H}_{\sigma}$ denotes the subgroup of $\sigma$-invariant elements of $\overline{H}.$




Let $G$ be a finite group such that  $G_0=O^{p'}(\overline{G}_{\sigma})\le G \le \overline{G}_{\sigma}$ (Note that all classical groups can be obtained in this way). Here $O^{p'}(\overline{G}_{\sigma})$ is the subgroup of $\overline{G}_{\sigma}$ generated by all $p$-elements of $\overline{G}_{\sigma}$. Then $T=\overline{T} \cap G$ is a maximal torus of $G$ and $N(G,T)=\overline{N} \cap G$ is the algebraic normaliser of $T$ in $G$.
%

In our notation for finite classical groups we follow \cite{kleidlieb}. In particular, $p$ is prime, $q=p^f$ for some positive integer $f$ and ${\bf u}$ is $2$ in unitary case and $1$ otherwise, so the natural module for a classical group is over $\mathbb{F}_{q^{\bf u}}.$ For unification of some formulations we use $GL_n^+(q)$ and $GL_n^-(q)$ for $GL_n(q)$ and $GU_n(q)$ respectively.


If $n$ is a positive integer, $r$ is an odd prime and $(r,n)=1$, then $e(r,n)$ is minimal positive integer $e$ such that $n^e \equiv 1 \mod r$. If $n$ is an odd integer, then let $e(2,n)=1$ if $n \equiv 1 \mod 4$ and $e(2,n)=2$ if $n \equiv -1 \mod 4.$

\begin{Lem}[{\cite[Lemma 1]{thlsy}}]
\label{inthom}
Let $G$ be a finite group and $A$ its normal subgroup. If $H$ is some Hall $\pi$-subgroup of $G$
then $H \cap A$ is a Hall $\pi$-subgroup of $A$ and $HA/A$ is one in $G/A$.
\end{Lem}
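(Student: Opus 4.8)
The plan is to reduce everything to a counting argument on indices, using the characterization that a subgroup $K\le G$ is a Hall $\pi$-subgroup precisely when $K$ is a $\pi$-group and $[G:K]$ is a $\pi'$-number. First I would record that, since $A\unlhd G$, the product $HA$ is a subgroup of $G$ containing $A$, so the chain $H\le HA\le G$ is available and the index is multiplicative: $[G:H]=[G:HA]\cdot[HA:H]$. Because $H$ is a Hall $\pi$-subgroup, $[G:H]$ is a $\pi'$-number, and hence each of the two factors $[G:HA]$ and $[HA:H]$ is a $\pi'$-number as well, being divisors of one.

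Next I would treat $H\cap A$. By the second isomorphism theorem $HA/A\cong H/(H\cap A)$, which yields the numerical identity $[HA:H]=[A:H\cap A]$. By the previous step $[HA:H]$ is a $\pi'$-number, so $[A:H\cap A]$ is a $\pi'$-number. Since $H\cap A$ is a subgroup of the $\pi$-group $H$, it is itself a $\pi$-group; combined with the index being a $\pi'$-number, this shows that $H\cap A$ is a Hall $\pi$-subgroup of $A$.

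Finally, for the quotient, I would use the correspondence theorem to obtain $[G/A:HA/A]=[G:HA]$, which was shown to be a $\pi'$-number. As $HA/A$ is a homomorphic image of the $\pi$-group $H$ (indeed $HA/A\cong H/(H\cap A)$), it is a $\pi$-group, so $HA/A$ is a Hall $\pi$-subgroup of $G/A$. I do not anticipate a genuine obstacle here: the whole argument rests on the multiplicativity of the index and the single isomorphism-theorem identity $[HA:H]=[A:H\cap A]$. The only point requiring a little care is that normality of $A$ is exactly what guarantees $HA$ is a subgroup, so that this index factorization is legitimate.
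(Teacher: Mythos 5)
Your proof is correct and complete: the characterization of Hall $\pi$-subgroups via $\pi$-group plus $\pi'$-index, the factorization $[G:H]=[G:HA]\cdot[HA:H]$ (legitimate since $A\unlhd G$ makes $HA$ a subgroup), and the identity $[HA:H]=[A:H\cap A]$ together give both conclusions. The paper does not prove this lemma at all --- it cites it directly from P.~Hall's work --- and your argument is precisely the standard one found there, so there is nothing to reconcile.
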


Following P. Hall \cite{thlsy}, we say that a group $G$ is an $E_\pi$-group, if $G$ possesses a Hall $\pi$-subgroup.

\begin{Lem}
\label{sp4}
Let $H \le GSp_4(q)$ such that $H$ stabilises a decomposition
$$V=V_1 \bot V_2$$
with $\dim V_i=2$ and $V_i$ non-degenerate for both $i=1,2.$ Then there exist $x,y,z \in Sp_4(q)$
such that $H \cap H^x \cap H^y \cap H^z \le Z(GSp_4(q)).$
\end{Lem}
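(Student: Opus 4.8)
The plan is to reduce to the full stabiliser of the decomposition and then cut it down by three conjugates chosen in general position. First I would note that it suffices to prove the statement for $H$ equal to the full setwise stabiliser $M=\{g\in GSp_4(q): \{gV_1,gV_2\}=\{V_1,V_2\}\}$, since any $H$ as in the lemma lies in $M$, whence $H\cap H^x\cap H^y\cap H^z\le M\cap M^x\cap M^y\cap M^z$. Writing elements in block form with respect to $V=V_1\oplus V_2$, the subgroup $M^{+}$ preserving each $V_i$ consists of the matrices $\diag(A,B)$ with $A\in GL(V_1)$, $B\in GL(V_2)$ and $\det A=\det B$ (on a two-dimensional symplectic space the similitude factor equals the determinant, and the common value $\det A=\det B$ is that factor), while $M=M^{+}\rtimes\langle t\rangle$, where $t$ interchanges $V_1$ and $V_2$. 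The centre $Z=Z(GSp_4(q))$ is the group of scalars, it lies in every conjugate of $M$, and it is exactly the kernel $M_{GSp_4(q)}$; so the assertion is equivalent to showing that the four conjugates $M,M^x,M^y,M^z$ meet in $Z$.

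By Witt's theorem $Sp_4(q)$ acts transitively on the non-degenerate orthogonal decompositions $V=Z_1\perp Z_2$ into two-dimensional subspaces, so every such decomposition is $x^{-1}\{V_1,V_2\}$ for some $x\in Sp_4(q)$; this lets me realise the three conjugates by any three decompositions I like, with $x,y,z\in Sp_4(q)$ as required. An element $h$ of the intersection must stabilise all four corresponding decompositions $\mathcal D_0,\dots,\mathcal D_3$. I would first take $\mathcal D_1=\{U_1,U_2\}$ transverse to $\mathcal D_0$, for instance $U_1=\langle e_1+e_2,\,f_1+f_2\rangle$ and $U_2=U_1^{\perp}$ in a symplectic basis $e_1,f_1,e_2,f_2$. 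A direct computation shows that the elements of $M$ also preserving $\mathcal D_1$ form a group $\Delta\rtimes F$, where $\Delta=\{\diag(C,C):C\in GL_2(q)\}$ is the diagonal copy of $GL_2(q)$ and $F$ is a finite $2$-group generated by $\diag(I,-I)$ and $t$; the crucial point is that forcing a block-diagonal $\diag(A,B)$ to fix $U_1$ pins down $A=B$.

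Identifying the two-dimensional non-degenerate subspaces in general position with graphs $W_D=\{(v,Dv):v\in V_1\}$ of isomorphisms $D\colon V_1\to V_2$, one checks that $\diag(C,C)\in\Delta$ carries $W_D$ to $W_{CDC^{-1}}$, so $\Delta$ acts on such decompositions through conjugation on $D$. Choosing $\mathcal D_2$ and $\mathcal D_3$ attached to two isomorphisms $D,D'$ generating an irreducible subalgebra of $\mathrm{End}(V_1)$ (equivalently, with no common eigenvector), the joint centraliser $C_{GL_2(q)}(D)\cap C_{GL_2(q)}(D')$ shrinks to the scalars, so the $\Delta$-part of the intersection collapses into $Z$. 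This is also why four decompositions are unavoidable: two still leave the full diagonal $GL_2(q)$, and a third only cuts it down to the centraliser of one regular element, a maximal torus, so a fourth decomposition is genuinely needed.

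The hard part will be the finite part: one must rule out that a non-scalar element acting as a non-trivial permutation of the eight subspaces (the swaps coming from $F$ and from $t$) survives in all four stabilisers. I would dispose of these by arranging $D,D'$ so that the accompanying decompositions are not invariant under $\diag(I,-I)$, $t$, or their products — that is, by breaking every symmetry of the configuration $\mathcal D_0,\mathcal D_1$ that is not already interior to $\Delta$ — so that any $h$ in the intersection is forced into $\Delta$ and hence is scalar. Since these genericity requirements need $q$ to be large enough, and since the explicit vectors above degenerate in characteristic $2$, I expect to need a short separate treatment (possibly a direct verification) of the smallest fields and of the even characteristic case.
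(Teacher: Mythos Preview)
Your strategy is genuinely different from the paper's and is conceptually reasonable, but it is incomplete precisely at the point you label ``the hard part''. You reduce correctly to $M\cap M^{x}=\Delta\rtimes F$ with $\Delta\cong GL_{2}(q)$ diagonally embedded and $F$ a Klein four-group generated by $\diag(I,-I)$ and the swap $t$; and the centraliser argument does collapse the $\Delta$-part to scalars once $D,D'$ generate an irreducible subalgebra. However, you never verify that $D,D'$ can be chosen so that \emph{no} element of any non-trivial coset $\Delta f$ (three cosets, each with its own swap pattern on $\mathcal D_{2}$ and $\mathcal D_{3}$) stabilises both decompositions simultaneously. This is not a single symmetry to break but several coupled conditions, and your own caveat --- that genericity may fail for small $q$ and that characteristic $2$ (where $\diag(I,-I)=I$, so $F$ changes) needs separate handling --- means the argument as written does not establish the lemma for all $q$.

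By contrast, the paper sidesteps the whole issue by writing down three explicit matrices $x,y,z$ with entries in $\{0,\pm1\}$, checking they lie in $Sp_{4}(q)$, and then doing a bare-hands block computation: first an incidence argument on the lines $V_{i}\cap W_{j}$, $V_{i}\cap U_{j}$ forces any $g$ in the intersection to preserve each $V_{i}$ (so the swap is killed at the outset, not deferred), and then comparing the three block shapes $h^{x},t^{y},s^{z}$ entrywise forces $g$ to be scalar. The virtue of this is uniformity: the same three matrices work over every $\mathbb F_{q}$ with no case division, whereas your approach would in the end still require either an explicit choice of $D,D'$ with a verification, or a separate computer check for the small cases you flag.
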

\begin{proof}
let ${e_1,f_1, e_2, f_2}$ be a basis of $V$ such that $V_i=\langle e_i, v_i \rangle$ and $(e_i,f_i)=1.$ Let $x$, $y,$ and $z$ be matrices
$$\begin{pmatrix}
1&0&-1&0 \\
0&1&0&0\\
0&0&1&0\\
0&1&0&1
\end{pmatrix}, \text{ }
\begin{pmatrix}
1&0&0&0 \\
0&1&1&0\\
0&0&1&0\\
1&0&0&1
\end{pmatrix} \text{ and }
\begin{pmatrix}
1&0&0&0 \\
0&1&0&1\\
-1&0&1&0\\
0&0&0&1
\end{pmatrix}
$$
respectively in this basis. It is routine to check that $x,y,z \in Sp_4(q).$

Denote $(V_i)x$ by $W_i$ and $(V_i)y$ by $U_i$ for $i=1,2.$ We claim that if $g \in S \cap S^{y} \cap S^{y},$ then $g$ stabilises $V_i$, $i=1,2.$ Assume the opposite, so $(V_1)g=(V_2).$ Therefore, $(W_1)g=W_2$ and $(U_1)g=U_2.$ Thus,
$$(V_1 \cap W_1)g= (V_1)g \cap (W_1)g= (V_2 \cap W_2)$$
and
$$(V_1 \cap U_1)g= (V_1)g \cap (U_1)g= (V_2 \cap U_2).$$
 Notice that $(V_2 \cap W_2)=(V_2 \cap U_2)$ but $(V_1 \cap W_1) \ne (V_1 \cap U_1)$ which is a contradiction since $g$ is invertible. Therefore, $g= \diag[g_1, g_2],$ $g_i \in GL_2(q).$  Also, $g=h^x$ where $h \in S^{x^{-1}} \cap S$, $g=t^y$ where $t \in S^{y^{-1}} \cap S$ and $g=t^z$ where $s \in S^{z^{-1}} \cap S$ It is routine to check that $h=\diag[h_1, h_2]$, $t=\diag[t_1,t_2]$ and $s=\diag[s_1,s_2]$ with $h_i, t_i, s_i \in GL_2(q).$

Now calculations  show that
\begin{align*}
g=
\begin{pmatrix}
h_{(1,1)}      & \multicolumn{1}{c|}{h_{(1,2)}}& 0& 0   \\
0      & \multicolumn{1}{c|}{h_{(1,4)}}& 0 & 0 \\ \cline{1-4}
0      & \multicolumn{1}{c|}{0}& h_{(2,1)} & 0   \\
0      & \multicolumn{1}{c|}{0}& h_{(2,3)} & h_{(2,4)}
\end{pmatrix} & =
\begin{pmatrix}
t_{(1,1)}      & \multicolumn{1}{c|}{0}& 0& 0   \\
t_{(1,3)}      & \multicolumn{1}{c|}{t_{(1,4)}}& 0 & 0 \\ \cline{1-4}
0      & \multicolumn{1}{c|}{0}& t_{(2,1)} & 0   \\
0      & \multicolumn{1}{c|}{0}& t_{(2,3)} & t_{(2,4)}
\end{pmatrix} \\ &=
\begin{pmatrix}
s_{(1,1)}      & \multicolumn{1}{c|}{0}& 0& 0   \\
s_{(1,3)}      & \multicolumn{1}{c|}{s_{(1,4)}}& 0 & 0 \\ \cline{1-4}
0      & \multicolumn{1}{c|}{0}& s_{(2,1)} & s_{(2,2)}   \\
0      & \multicolumn{1}{c|}{0}& 0 & s_{(2,4)}
\end{pmatrix}
\end{align*}
for some $h_{(i,j)}, t_{(i,j)}, t_{(i,j)} \in \mathbb{F}_q$  with
\begin{align*}
h_{(1,1)}=h_{(2,1)};& &t_{(1,1)}=t_{(2,4)}; \\
h_{(1,4)}=h_{(2,4)};& &t_{(1,4)}=t_{(2,1)}.
\end{align*}
So $g$ is scalar and $g \in Z(GSp_4(q)).$
\end{proof}

\section{ Hall subgroups of odd order}
In this section we assume $2,p \notin \pi,$ where $p$ is the characteristic of the base field of a classical group $G$.

\begin{Lem}[{\cite[Theorem A]{gross1}}]
\label{n2conj}
 Suppose the finite group $G$ has a Hall $\pi$-subgroup where $\pi$ is a set of primes not containing $2$. Then all Hall $\pi$-subgroups of $G$ are conjugate.
\end{Lem}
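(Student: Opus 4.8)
The plan is to prove the property $C_\pi$ --- that Hall $\pi$-subgroups exist and form a single conjugacy class --- by induction on $|G|$, reducing to the case of a simple group and then invoking the classification of finite simple groups. The crucial observation is that the hypothesis $2\notin\pi$ is a condition on $\pi$ alone, so it is inherited by every subgroup and every section of $G$; this makes a clean induction possible. The engine of the induction is the closure of $C_\pi$ under extensions, which forces the reduction at once: if $G$ has any proper nontrivial normal subgroup $N$, then by Lemma~\ref{inthom} both $N$ and $G/N$ are $E_\pi$, hence $C_\pi$ by the inductive hypothesis, and the extension step returns $G\in C_\pi$. Thus only the case of $G$ simple survives.

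The extension step itself is a Frattini argument. Given Hall $\pi$-subgroups $H_1,H_2$ of $G$, Lemma~\ref{inthom} tells us that $H_iN/N$ are Hall $\pi$-subgroups of $G/N$ and $H_i\cap N$ are Hall $\pi$-subgroups of $N$. Using conjugacy in $G/N$ I replace $H_2$ so that $H_1N=H_2N=:M$, and using conjugacy in $N$ I replace $H_1$ by an $N$-conjugate so that $D:=H_1\cap N=H_2\cap N$; note $D\unlhd H_i$ because $N\unlhd M$, and $D$ is a Hall $\pi$-subgroup of $N$. A Frattini argument then gives $M=N\,N_M(D)$, so both $H_i$ lie in $L:=N_M(D)$ and remain Hall $\pi$-subgroups of it. If $L<G$ I finish by induction applied to $L$; if $L=G$ then $D\unlhd G$, and either $D\ne 1$, in which case I pass to the smaller group $G/D$ and induct, or $D=1$, in which case $N$ is a $\pi'$-group and $H_1,H_2$ are complements to $N$ in $M=G$. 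This is the one place where $2\notin\pi$ is indispensable: by the Feit--Thompson theorem the $\pi$-group $H_i$ has odd order and is therefore solvable, so the conjugacy part of the Schur--Zassenhaus theorem applies and the two complements are conjugate.

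It remains to verify that every nonabelian finite simple group $S$ possessing a Hall $\pi$-subgroup (for $\pi$ a set of odd primes) has all of them conjugate, and this is the genuinely hard part. I would run through the classification family by family: the alternating groups, the sporadic groups, and --- by far the most laborious --- the groups of Lie type, where one must locate the Hall $\pi$-subgroups inside tori, their normalisers, and parabolic subgroups, and then prove conjugacy. Oddness is used throughout: every $\pi$-subgroup is solvable by Feit--Thompson, which tightly constrains the possible Hall $\pi$-subgroups, while Schreier's conjecture (a consequence of the classification) guarantees that $\mathrm{Out}(S)$ is solvable and so interacts predictably with these solvable Hall subgroups. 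I expect the Lie-type analysis to be the main obstacle, since it is there that the arithmetic of $q=p^f$ and the interplay of inner-diagonal, field, and graph automorphisms decide whether conjugacy holds.

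Finally, the restriction $2\notin\pi$ cannot be dropped: for $\pi=\{2,3\}$ the simple group $\mathrm{PSL}_2(7)$ of order $168$ has two non-conjugate Hall $\{2,3\}$-subgroups (the point and hyperplane stabilisers in the isomorphic copy $\mathrm{GL}_3(2)$, each isomorphic to the symmetric group of degree $4$), so conjugacy already fails for a single simple group. This both pinpoints where the argument would collapse for even $\pi$ --- the Schur--Zassenhaus step loses its solvable complement --- and confirms that the classification input is unavoidable.
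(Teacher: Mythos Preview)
The paper gives no proof of this lemma at all: it is simply quoted verbatim as \cite[Theorem~A]{gross1} and used as a black box. Your outline---reduce by induction on $|G|$ via the closure of $C_\pi$ under extensions (Frattini argument plus Schur--Zassenhaus, with Feit--Thompson supplying the solvability of the odd-order complement), then handle the remaining simple case family by family via the classification---is exactly Gross's original strategy, so your proposal is correct and faithful to the cited source; there is nothing further to compare against the present paper.
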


Let $\overline{G}$ be a simple classical algebraic group of adjoint type, $\sigma$ be a Frobenius morphism such that $G_0$ is a finite simple group. Let $G_0 \le G \le \overline{G}_{\sigma},$ so $G$ is an almost simple group. It follows from \cite{gross1} that the group $G$ has a $\pi$-Hall subgroup if, and only if, every composition factor of $G$ has a $\pi$-Hall subgroup. Therefore, we can assume $G= H_1 G_0,$ where $H_1 \in \Hall_{\pi}(\overline{G}_{\sigma}).$ Indeed, if $H \in \Hall_{\pi}(G),$ then there exists $H_1 \in \Hall_{\pi}(\overline{G}_{\sigma})$ such that $H=H_1 \cap G$ by Lemma \ref{inthom} and Theorem \ref{n2conj}. So, if
$$H_1^{g_1} \cap \ldots H_1^{g_k} =1$$
for some $k$ with $g_i \in H_1 G_0$, then $g_i=h_i \cdot s_i$ with $h_i \in H_1$ and $s_i \in G_0.$ Therefore
$$H^{s_1} \cap \ldots H^{s_k} \le H_1^{g_1} \cap \ldots H_1^{g_k} =1.$$
Moreover, by Lemma \ref{inthom} and \cite[Lemma~2.1(e)]{23Hall}, we can assume that
$H$ is a Hall $\pi$-subgroup of $\hat{G} \in \{GL_n(q), GU_n(q), GSp_{2n}(q), GO_n^{\varepsilon}(q)\}.$ and $G= H \cdot (\hat{G} \cap SL_n(q^{\bf u})).$

Criteria for existence and structure of odd order Hall subgroups of classical groups is studied in \cite{gross2}. It is explicetely shown in \cite{oddHall} that, if exists, $\pi$-Hall subgroup of a classical group $G$ of Lie type lies in $N(G,T)$ for some maximal torus $T$.


\begin{Lem}\label{oddorderHall}
 Let $\hat{G} \in \{GL_n(q), GU_n(q), GSp_{2n}(q), GO_n^{\varepsilon}(q)\}$ with $n \ge 2,3,4,7$ in linear, unitary, symplectic and orthogonal cases respectively. Let $q$ be such that $\hat{G}$ is not solvable.   Let $\pi$ be a set of primes such that $2, p \notin \pi$ and $|\pi \cap \pi(G)| \ge 2$,  let $r$ be the smallest prime in $\pi \cap \pi(G)$, and let $\tau = \pi \backslash \{r\}$. Let $H$ be a Hall $\pi$-subgroup of $\hat{G}$. If $G=H \cdot (\hat{G} \cap SL_n(q^{\bf u})),$ then there exist $x,y,z \in G$ such that $$H \cap H^x \cap H^y \cap H^z \le Z (\hat{G}).$$
\end{Lem}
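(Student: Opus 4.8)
The plan is to exploit the concrete geometry of an odd-order Hall subgroup. First I would record the structure of $H$: by \cite{oddHall} we may assume $H\le N(\hat G,T)$ for a suitable maximal torus $T$, and by the description of odd Hall subgroups in \cite{gross2} (together with the hypothesis $|\pi\cap\pi(G)|\ge 2$, which forces all primes of $\pi\cap\pi(G)$ to share the single value $e=e(r,q)$) the torus $T$, and hence $H$, normalises a decomposition
\[
V=V_1\perp\cdots\perp V_t\perp V_0
\]
of the natural module (a direct sum in the linear case, orthogonal in the formed cases) into pairwise isometric subspaces $V_i$ of dimension $e$, non-degenerate in the unitary, symplectic and orthogonal cases, with $V_0$ of dimension $<e$ carrying trivial $\pi$-part. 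On each $V_i$ the torus induces a Singer cycle of order $q^e\mp1$ (with the sign appropriate to the type), whose normaliser in the isometry group of $V_i$ is an extension of that cycle by the group $C_e$ of field automorphisms; moreover $H$ permutes $\{V_1,\dots,V_t\}$ through a $\pi$-subgroup $P\le S_t$. I would feed the symplectic and orthogonal cases through this linear/unitary block model, with Lemma~\ref{sp4} supplying the pairwise separation of blocks in the symplectic case, of which it is essentially the instance $t=2$, $e=2$.

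The construction of $x,y,z$ I would carry out in two stages. Stage A produces conjugators so that every $g$ in the intersection stabilises each block $V_i$: since $H$ preserves $\{V_i\}$ while $H^x$ preserves $\{V_ix\}$, choosing $x$ and $y$ so that $\{V_i\}$, $\{V_ix\}$, $\{V_iy\}$ are in general position leaves the individual $V_i$ as the only subspaces respected simultaneously, forcing $g$ to be block-diagonal. This is the exact analogue, for general $t$, of the swap-breaking argument in the proof of Lemma~\ref{sp4}, and I would write down explicit conjugators as there and check that they lie in $SL_n(q^{\mathbf u})$. Stage B reduces each $g|_{V_i}$ to a scalar; the key local fact is that the Frobenius field automorphism centralises inside the Singer cycle exactly the scalar subgroup $\mathbb{F}_q^\times$, so by arranging the remaining conjugator to meet the Singer cycle of $V_i$ with a conjugate, differently embedded Singer cycle—two such tori meeting in the centre when $e$ is prime—I would force $g|_{V_i}\in\mathbb{F}_q^\times$. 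A final scalar-matching step then makes the block scalars agree and land in $Z(\hat G)$.

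I would organise the whole argument by induction on $|\pi|$, and this is the role of the splitting $\pi=\{r\}\cup\tau$: the smallest prime $r$ pins down $e$ and the coarse block-permutation layer $P\le S_t$, while the primes of $\tau=\pi\setminus\{r\}$ live inside the blocks. Once the $r$-layer has been removed, the residual problem is controlled by $\tau$, the base case being a single cyclic Singer factor, where Stage B is immediate. The bounds $n\ge 2,3,4,7$ are exactly what guarantee that the base cases are non-solvable and admit the relevant Hall subgroups, so the induction starts cleanly; the precise inductive reduction of the $\tau$-structure to the block-isometry setting is itself part of the work.

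The hard part will be Stage A for large $t$ in the presence of a nontrivial field-automorphism part: with only three conjugators one must simultaneously destroy every block permutation allowed by $P$, kill the Galois twists on each block, and shrink each Singer cycle to its scalars, and all conjugators are constrained to lie in $G=H\cdot(\hat G\cap SL_n(q^{\mathbf u}))$ rather than in all of $\hat G$. I expect the genuine case distinctions to be between the four families and between $e$ prime, where two Singer tori already intersect in the centre so that Stage B is cheap, and $e$ composite, where the intersection of Singer cycles must be cut down in several steps. Exhibiting explicit $SL$-conjugators that achieve block-diagonality without assuming that $P$ is transitive is where the real calculation sits.
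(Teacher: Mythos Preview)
Your proposal rests on a structural claim that is not correct: it is not true that all primes of $\pi\cap\pi(G)$ share a single value of $e(\cdot,q)$. Gross's criterion guarantees only that the primes of $\tau=\pi\setminus\{r\}$ have a common value $b$, while $a=e(r,q)$ can differ; for $GL_n(q)$ the paper records the four cases $(A)$ $a=b$, $(B)$ $a=r-1,\ b=r$, $(C)$ $a=r-1,\ b=r$ with a residue condition, and $(D)$ $a=r-1,\ b=1$. Hence the uniform block decomposition $V=V_1\perp\cdots\perp V_t\perp V_0$ with all $V_i$ of dimension $e$ is only valid in case $(A)$; in cases $(B)$--$(D)$ the Hall subgroup splits as $H=H_\tau\times R$ with $H_\tau\le GL_{[n/r]}(q^r)$ and a cyclic $r$-group $R\le GL_{r-1}(q)$, so the blocks carrying $r$ and the blocks carrying $\tau$ have different sizes. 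Your Stage~A/Stage~B machinery, and the induction on $|\pi|$ built on top of it, therefore does not get started in the cases where $a\ne b$.

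Even in case $(A)$ the paper proceeds very differently, and much more cheaply, than you propose. It never writes down explicit conjugators beyond the $Sp_4$ calculation of Lemma~\ref{sp4}. Instead it observes two things. First, if $H$ is abelian, Zenkov's theorem already gives a single $x\in G$ with $H\cap H^x\le Z(\hat G)$, so one may assume the Sylow $r$-subgroup of $\hat G$ is non-abelian. Second, in every remaining case the description in \cite{oddHall} places $H$ inside the monomial group of a suitable $GL_m(q^c)$, hence inside a maximal imprimitive subgroup $M$ of Aschbacher class $\mathcal C_2$ of $H\cdot SL_m(q^c)$; one then quotes the base-size theorem of Burness \cite[Theorem~1.1]{burness} for the action on cosets of $M$ to obtain $x,y,z$ with $H\cap H^x\cap H^y\cap H^z\le Z$. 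In the mixed cases $(B)$, $(C)$ the two direct factors $H_\tau$ and $R$ are handled separately (Burness for $H_\tau$, Zenkov for the cyclic $R$) and the conjugators are assembled block-diagonally. The only place where an explicit matrix computation survives is $GSp_4(q)$ with $a=2$, where the relevant $\mathcal C_2$-subgroup can be ``standard'' in Burness's sense and Lemma~\ref{sp4} is invoked instead. So the ``hard part'' you anticipate---destroying block permutations and cutting Singer cycles by hand for large $t$---is entirely bypassed by citing \cite{burness} and \cite{zen}.
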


\begin{proof}
Denote by $r$ the minimal number in $\pi\cap\pi(G)$, and $(\pi\cap \pi(G))\setminus \{r\}$ by $\tau$. Recall, that, by \cite[Theorem 4.9]{gross2}, $\hat{G}$ is a $E_{\pi}$ subgroup if, and only if, $\hat{G}$ is $E_{\{t,s\}}$ for all $t,s \in \pi.$ By Theorem \cite[Theorem 4.6]{gross2}, if $\hat{G} \in \{GL_n(q), GU_n(q), GSp_{2n}(q)\}$, then $H$ has a normal abelian Hall $\tau$-subgroup, $\hat{G}$ satisfies $D_{\tau}$, all $\tau$-subgroups of $\hat{G}$ are abelian  and $e(q,t)=e(q,s)$ for all $t,s \in \tau.$ By \cite[Theorem 4.8]{gross2}, if $\hat{G}= GO_n^{\varepsilon}(q)$, then $H$ has a normal abelian Hall $\tau$-subgroup, $\hat{G}$ satisfies $D_{\tau}$, all $\tau$-subgroups of $\hat{G}$ are abelian  and either $H$ is cyclic or $e(q,t)=e(q,s)$ for all $t,s \in \tau.$

Let $\hat{G}=GL_n(q).$
 By \cite[Theorems 4.2 and 4.6]{gross2}, $\hat{G}$ is a $E_\pi$ group if, and only if, $n<bs$ for every $s\in\tau$, and one of the following is true:
\begin{enumerate}
\item[$(A)$] $a=b$;
\item[$(B)$] $a=r-1,$ $b=r,$ $(q^{r-1}-1)_r=r,$ and $[\frac{n}{r-1}]=[n/r];$
\item[$(C)$]  $a=r-1,$ $b=t,$ $(q^{t-1}-1)_r=r,$  $[\frac{n}{r-1}]=[n/r]+1$, and $n \equiv f-1 \pmod r$;
\item[$(D)$]  $a=r-1,$ $b=1,$ $(q^{r-1}-1)_r=r,$ and $[\frac{n}{r-1}]=[n/r].$
\end{enumerate}

If  $H$ is Abelian, then there exists $x \in G$ such that $$H \cap H^x \le Z(GL_n(q))$$
by \cite[Theorem 1]{zen}. So we assume that $H$ is not abelian, so, by the proof of  \cite[Theorem 4]{oddHall}, a Sylow $r$-subgroup of $\hat{G}$ is not abelian.

Assume that $(A)$ is realised. By the proof of  \cite[Theorem 4]{oddHall}, $H$ lies in the subgroup $G_1=GL_{[n/a]}(q^a)$ of $\hat{G}.$ Precisely, $H$ lies in the group of monomial matrices of $G_1.$ So
$$V=V_1 \oplus \ldots \oplus V_{[n/a]} \oplus W$$
where $\dim V_i=a$ for $i \in \{1, \ldots, [n/a]\}$, $\dim W= n-[n/a] \cdot a$, $W \subseteq C_H(V)$ and $H$ permutes $V_i$.  Therefore $H$ lies in a maximal irreducible group of $H \cdot SL_{[n/a] \cdot a}(q)$ (if $[n/a]>1$, then $H$ lies in an maximal imprimitive subgroup $M \in \mathit{C}_2$; if $[n/a]=1$, then $H$ is abelian) and there exist $x,y,z \in SL_{[n/a] \cdot a}(q) \le SL_n(q)$ such that
$$H \cap H^x \cap H^y \cap H^z \le Z(GL_{[n/a] \cdot a}(q)) \times I_{n-[n/a] \cdot a}$$ by \cite[Theorem 1.1]{burness}.
Notice, that if $a>1$, then $H \cap (Z(GL_{[n/a] \cdot a}(q)) \times I_{n-[n/a] \cdot a})=1;$ if $a=1$, then $[n/a]=n$, so the statement follows in both cases.

Assume that $(B)$ or $(C)$ is realised. By the proof of  \cite[Theorem 4]{oddHall}, $H$ lies in  $$G_1=GL_{[n/r]}(q^r) \times GL_{r-1}(q) \le \hat{G}$$
and
 $$(q^{r-1})_r=|G|_r=|GL_{r-1}(q)|_r=|G_1|_r=r;$$
Also, Hall $\tau$-subgroup of $\hat{G}$ lies in the subgroup of diagonal matrices of $GL_{[n/r]}(q^r).$ Let $V=U \oplus  W$ where $U$ is the natural module for $GL_{[n/r] \cdot r}(q)$ and $W$ is the natural module for $GL_{r-1}(q).$  So $$H=H_{\tau} \times R$$ where $H_{\tau} \le GL_{[n/r] \cdot r}(q)$ stabilises the decomposition
$$U=V_1 \oplus \ldots \oplus V_{[n/r]} \text{ with } \dim V_i=r;$$
and  $R \le GL_{r-1}(q)$ is a cyclic $r$-subgroup. Therefore, as $H$ in the previous case, $H_{\tau}$ lies in the maximal irreducible subgroup of $H_{\tau} \cdot SL_{[n/r] \cdot r}(q)$ and there exist $x_{1},y_1,z_1 \in SL_{[n/r] \cdot r}(q)$ such that
$$H \cap H_{\tau}^{x_1} \cap H_{\tau}^{y_1} \cap H_{\tau}^{z_1} \le Z(GL_{[n/a] \cdot a}(q)) \times I_{n-[n/a] \cdot a}$$ by \cite[Theorem 1.1]{burness}. By \cite[Theorem 1]{zen}, there exist $x_2 \in R \cdot SL_{r-1}(q)$ (so we can assume $x_2 \in SL_{r-1}(q)$) such that $R \cap R^{x_2}=1,$ since $a=r-1>1$, so $R \cap Z(GL_{r-1}(q))=1.$ Let $x=\diag[x_1, x_2]$, $y=\diag[y_1, I_{r-1}]$, $z=\diag[z_1, I_{r-1}]$. It is easy to see that
$$H \cap H^x \cap H^y \cap H^z=1.$$

Assume that $(D)$ is realised.  By the proof of  \cite[Theorem 4]{oddHall}, $H$ lies
in the group of monomial matrices of $\hat{G},$ so $H$ lies in the maximal imprimitive group of $H \cdot SL_{n}(q)$ and there exist $x,y,z \in SL_{n}(q)$ such that
$$H \cap H^x \cap H^y \cap H^z \le Z(GL_{n})$$ by \cite[Theorem 1.1]{burness}.

Let $\hat{G}=GU_n(q).$
 By \cite[Theorems 4.3 and 4.6]{gross2}, $\hat{G}$ is a $E_\pi$ group if, and only if, $n<bs$ for all $s\in\tau$, and one of the following is true:
\begin{enumerate}
\item[$(A)$] $a=b \equiv 0 \mod 4$;
\item[$(B)$] $a=b \equiv 2 \mod 4$ and $2n<bs$ for all $s\in\tau$;
\item[$(C)$] $a=b \equiv 1 \mod 2$;
\item[$(D)$] $r \equiv 1 \mod 4$, $a=r-1,$ $b=2r,$ $(q^{n}-1)_r=r,$ and $[\frac{n}{r-1}]=[n/r];$
\item[$(E)$] $r \equiv 3 \mod 4$, $a=\frac{r-1}{2},$ $b=2r,$ $(q^{n}-1)_r=r,$ and $[\frac{n}{r-1}]=[n/r];$
\item[$(F)$] $r \equiv 1 \mod 4$, $a=r-1,$ $b=2r,$ $(q^{n}-1)_r=r,$ and $[\frac{n}{r-1}]=[n/r]+1$ and
$n \equiv r-1 \pmod r;$
\item[$(G)$] $r \equiv 3 \mod 4$, $a=\frac{r-1}{2},$ $b=2r,$ $(q^{n}-1)_r=r,$ and $[\frac{n}{r-1}]=[n/r]+1$ and
$n \equiv r-1 \pmod r;$

\item[$(H)$] $r \equiv 1 \mod 4$,  $a=r-1,$ $b=2,$ $(q^{n}-1)_r=r,$  $n<2s$ and $[\frac{n}{r-1}]=[n/r]$;
\item[$(I)$] $r \equiv 3 \mod 4$, $a=\frac{r-1}{2},$ $b=2,$ $(q^{n}-1)_r=r,$ $n<2s$ and $[\frac{n}{r-1}]=[n/r].$
\end{enumerate}

If $H$ is abelian, then there exists $x \in G$ such that $$H \cap H^x \le Z(GU_n(q))$$
by \cite[Theorem 1]{zen}. So let $H$ be non-abelian.

In cases $(A)$--$(C)$, by the proof of  \cite[Theorem 4]{oddHall}, $H$ lies in subgroup $G_1=GL_{[n/a]}(q^a)$ of $\hat{G}$ and the statement follows as in case $(A)$ for $\hat{G}=GL_n(q).$

In cases $(D)$--$(G)$,  by the proof of \cite[Theorem 4]{oddHall}, $H$ is abelian.

In cases $(H)$ and $(I),$  by the proof of \cite[Theorem 4]{oddHall}, $H$ lies in the group of monomial matrices of $\hat{G}$ so $H$ lies in the maximal imprimitive group of $G$ and there exist $x,y,z \in SU_{n}(q)$ such that
$$H \cap H^x \cap H^y \cap H^z \le Z(GU_{n}(q))$$ by \cite[Theorem 1.1]{burness}.

Let $\hat{G}=GO_n^{\varepsilon}$.
 By \cite[Theorems 4.4 and 4.6]{gross2}, $\hat{G}$ is a $E_\pi$ group if, and only if, $n<bs$ for all $s\in\tau$, and one of the following is true:
\begin{enumerate}
\item[$(A)$] $\varepsilon=+$, $a=b \equiv 0 \mod 2$ and $n<bs$;
\item[$(B)$] $\varepsilon=+$, $a=b \equiv 1 \mod 2$ and $n<2bs$;
\item[$(C)$] $\varepsilon=-$, $a=b \equiv 0 \mod 2$ and $n<bs$;
\item[$(D)$] $\varepsilon=-$, $a=b \equiv 1 \mod 2$ and $n<bs$;
\item[$(E)$] $\varepsilon=-$, $a \equiv 1 \mod 2$, $b=2a$ and $n=4a$;
\item[$(F)$] $\varepsilon=-$, $b \equiv 1 \mod 2$, $a=2b$ and $n=4b$;
\end{enumerate}

The proof in cases $(A)$--$(D)$ is analogous to the proof for $\hat{G}=GL_n(q)$ in case $(A)$ and for $\hat{G}=GU_n(q)$ in cases $(A)$--$(C).$ In cases $(E)$ and $(D)$, by the proof of  \cite[Theorem 4]{oddHall}, $H$ is abelian.

Let $\hat{G}=GSp_{2n}(q).$ By \cite[Theorem 4.5]{gross2}, $\hat{G}$ is a $E_\pi$ group if, and only if, one of the following is true:
\begin{enumerate}
\item[$(A)$]  $a=b \equiv 0 \mod 2$ and $2n<bs$ for all $s\in\tau$;
\item[$(B)$]  $a=b \equiv 1 \mod 2$ and $n<bs$ for all $s\in\tau$;
\end{enumerate}

In both cases the proof is analogous  to the proof for $\hat{G}=GL_n(q)$ in case $(A)$ and for $\hat{G}=GU_n(q)$ in cases $(A)$--$(C)$ unless $G \le GSp_4(q)$ and $a=2,$ so  $H$  lies in maximal subgroup $M$ stabilising a decomposition of $V$ into two non-degenerate subspaces. In this case $M$ can be a {\it standard} subgroup in terms of \cite{burness}. If it is the case, then the statement  follows by Lemma \ref{sp4}.
\end{proof}

\section{ Hall subgroups of even order}
In this section we assume $2 \in \pi$ and $p\not\in\pi$, where $p$ is the characteristic of the base field of a classical group $G$.

Let $\overline{G}$ be a simple classical algebraic group of adjoint type, $\sigma$ be a Frobenius morphism such that $G_0$ is a finite simple group. Let $G_0 \le G \le \overline{G}_{\sigma},$ so $G$ is an almost simple group.

Assume that $3 \notin \pi.$ It follows from \cite[Conjectures 1.2 and 1.3]{oddHall} (this Conjectures follows from the results of \cite{oddHall})  that if $G$ has a Hall $\pi$-subgroup $H$, then $H$ is solvable and all such subgroups are conjugate in $G$. Also, a finite group $R$ has a $\pi$-Hall subgroup if, and only if, every composition factor of $R$ has a $\pi$-Hall subgroup.

 Therefore, we can assume $G= H \cdot G_0,$ where $H \in \Hall_{\pi}(\overline{G}_{\sigma})$ as in previous section.
Moreover,  by Lemma \ref{inthom} and \cite[Lemma~2.1(e)]{23Hall}, we can assume that
$H$ is a Hall $\pi$-subgroup of $\hat{G} \in \{GL_n(q), GU_n(q), GSp_{2n}(q), GO_n^{\varepsilon}(q)\}.$ and $G= H \cdot (\hat{G} \cap SL_n(q^{\bf u})).$

\begin{Lem}
\label{3notinpi}
Let $3,p \notin \pi$ and $2 \in \pi.$ Let $H$ be a solvable Hall $\pi$-subgroup of $$\hat{G} \in \{GL_n(q), GU_n(q), GSp_n(q), GO_n^{\varepsilon}(q)\}$$ with $n \ge 2,3,4,7$ in linear, unitary, symplectic and orthogonal cases respectively. Let $q$ be such that $\hat{G}$ is not solvable. Let $G_0=SL_n(q^{\bf u}) \cap \hat{G}.$ If $G= H \cdot G_0,$ then there exist $x,y,z \in G$ such that $$H \cap H^x \cap H^y \cap H^z\le Z(\hat{G}).$$
\end{Lem}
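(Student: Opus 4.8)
The plan is to run the same argument as in Lemma \ref{oddorderHall}, now with the smallest prime equal to $2$. Since $2\in\pi$, put $r=2$, $\tau=\pi\setminus\{2\}$, and $a=e(2,q)\in\{1,2\}$ (so $a=1$ if $q\equiv1\pmod4$ and $a=2$ if $q\equiv-1\pmod4$). By hypothesis $H$ is solvable, and by the structural results of \cite{gross2} together with the even-order analogue of the classification used in the proof of \cite[Theorem~4]{oddHall}, $H$ has a normal abelian Hall $\tau$-subgroup contained in the diagonal part of an extension-field subgroup of type $GL_{[n/a]}(q^a)$, while the whole of $H$ normalises a decomposition of the natural module $V$ into isometric subspaces of dimension $a$. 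The hypothesis $3\notin\pi$ enters precisely here: the permutation group that $H$ induces on the blocks of this decomposition is a $\pi$-group, hence a $3'$-group, which keeps $H$ inside a maximal imprimitive subgroup $M$ of Aschbacher class $\mathcal{C}_2$ (or an extension-field subgroup of class $\mathcal{C}_3$ reducing to it) whose block-permutation part has order prime to $3$.

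First I would dispose of the abelian case: if $H$ is abelian, then by \cite[Theorem~1]{zen} there is a single $x\in G$ with $H\cap H^x\le Z(\hat G)$, and taking any $y,z$ gives the claim. So assume $H$ is non-abelian. Then, as in cases $(A)$--$(D)$ of the $GL_n(q)$ analysis in Lemma \ref{oddorderHall}, $H$ lies in a maximal imprimitive subgroup of $H\cdot(\hat G\cap SL_n(q^{\mathbf u}))$ associated to the decomposition $V=V_1\perp\dots\perp V_m$ with $\dim V_i=a$, and I would apply \cite[Theorem~1.1]{burness} to the action on the cosets of this imprimitive geometric subgroup to produce $x,y,z\in G_0$ with $H\cap H^x\cap H^y\cap H^z\le Z(\hat G)$. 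The extension-field configuration ($H\le GL_{[n/a]}(q^a)$ with block-diagonal $\tau$-part and the $2$-part permuting the blocks) reduces to this imprimitive one on the $[n/a]$ blocks exactly as in the odd-order proof.

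The main obstacle is the family of small cases where the relevant subgroup $M$ is \emph{standard} in the sense of \cite{burness}, so that the bound of \cite[Theorem~1.1]{burness} does not by itself furnish three conjugates with central intersection. The decisive instance is $\hat G=GSp_4(q)$ with $a=2$ (hence $q\equiv-1\pmod4$), where $H$ stabilises a decomposition $V=V_1\perp V_2$ into two non-degenerate planes; this is resolved directly by Lemma \ref{sp4}, which supplies $x,y,z\in Sp_4(q)$ with $H\cap H^x\cap H^y\cap H^z\le Z(GSp_4(q))$. I would then enumerate, under the constraints $2\in\pi$ and $3,p\notin\pi$, the remaining low-dimensional configurations for $\hat G\in\{GU_n(q),GO_n^{\varepsilon}(q),GSp_{2n}(q)\}$ and check that each is either abelian (handled by \cite[Theorem~1]{zen}), lands in a non-standard imprimitive subgroup (handled by \cite[Theorem~1.1]{burness}), or reduces to the $GSp_4(q)$ configuration of Lemma \ref{sp4}.

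The part requiring the most care is the structural bookkeeping: one must re-derive, with $r=2$ and $a=e(2,q)$, the even-order analogues of the $E_\pi$-existence conditions of \cite{gross2} (the counterparts of the lists $(A)$--$(D)$, $(A)$--$(I)$ appearing in Lemma \ref{oddorderHall}), verify that $3\notin\pi$ genuinely confines $H$ to an imprimitive or extension-field subgroup with $3'$ block-permutation part, and confirm that the only standard-subgroup exception surviving these constraints is the $GSp_4(q)$ case covered by Lemma \ref{sp4}. The extra input relative to the odd case is the Sylow $2$-structure entering through $a=e(2,q)$ and through $Z(\hat G)$ having even order; neither affects the conclusion, since the argument only requires the intersection to lie in $Z(\hat G)$.
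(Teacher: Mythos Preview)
Your plan has a real gap. The paper does not port the odd-order argument with $r=2$ and the Gross existence lists; instead it invokes the even-order structure theorem \cite[Theorem~5.2]{evenHall} (and \cite[Theorem~1]{sylow2}) to place $H$ in $N(\hat G,T)$ for a torus $T$ containing a Sylow $2$-subgroup, which yields a decomposition
\[
V=V_1\perp\cdots\perp V_k\perp W,\qquad \dim V_i=2,\ \dim W\in\{0,1,2\},
\]
with the blocks always $2$-dimensional (not $a$-dimensional). When $\dim W=0$ this is imprimitive and \cite[Theorem~1.1]{burness} (together with Lemma~\ref{sp4} for $GSp_4$) finishes as you suggest. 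But when $\dim W\in\{1,2\}$---which happens for $GL_n^\pm(q)$ with $n$ odd, for $GO_n(q)$ with $n$ odd, and for $GO_n^\varepsilon(q)$ with $n$ even when the leftover plane has the wrong type---$H$ does \emph{not} sit inside any maximal imprimitive $\mathcal{C}_2$-subgroup, and neither Burness's theorem nor the $GSp_4$ lemma applies. Your proposal does not address these cases.

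The paper's device for the leftover-$W$ cases is the step you are missing: choose a basis adapted to the decomposition and an explicit signed permutation matrix $x$ (e.g.\ the long cycle $(1,2,\dots,n)$ for $GL_n^\pm$, or the cycle $(1,3,5,\dots,n)$ on the ``first'' basis vectors of each block for $GO_n^\varepsilon$) lying in $G_0$, so that $H\cap H^x$ is forced to preserve two transverse block decompositions and hence consists of diagonal matrices. Then $H\cap H^x$ is abelian and \cite[Theorem~1]{zen} supplies a single further conjugate $y$ with $(H\cap H^x)\cap(H\cap H^x)^y\le Z(\hat G)$. Without this permutation-matrix trick your enumeration cannot close, because the ``non-standard imprimitive'' and ``reduces to $GSp_4$'' alternatives you list simply do not cover the odd-$n$ linear/unitary or the orthogonal $\dim W\in\{1,2\}$ configurations.
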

\begin{proof}
Let  $H_0=H \cap G_0.$ By \cite[Theorem 5.2]{evenHall}, $H_0$ lies on $N(G_0,T_0)$ where $T_0$ is a maximal torus of $G_0$ such that $N(G_0,T_0)$ contains a Sylow $2$-subgroup of $G_0$ (all such tori are conjugate in $G_0$ by \cite[Lemma 3.10]{evenHall}) and one of the following is realised
\begin{itemize}
\item $e(2,q)=1$ and $\pi \cap \pi (G_0) \subseteq \pi(q-1);$
\item $e(2,q)=2$ and $\pi \cap \pi (G_0) \subseteq \pi(q+1).$
\end{itemize}

It is easy to see that, if $T \ge T_0$ is a maximal torus of $\hat{G}$ containing a Sylow $2$-subgroup, then $H \le N(\hat{G},T),$ since $|N(\hat{G},T)|_{\pi}=|\hat{G}|_{\pi}.$ By \cite[Theorem 1]{sylow2} (or the proof of \cite[Lemma 3.10]{evenHall}), $N(G,T)$, and hence $H$, stabilises a decomposition
\begin{equation}
\label{decompW}
V=V_1 \bot \ldots \bot V_{[k]} \bot W
\end{equation}
where $\dim V_i=2$ and $\dim W \in \{0,1,2\}.$ By that we mean that $H$ stabilises $W$ and permutes $V_i.$ If $\hat{G}$ is unitary, symplectic or orthogonal, then $V_i$-s are pairwise isometric non-degenerate subspace and $W$ is a non-degenerate subspace. In particular, if $\hat{G}$ is orthogonal and $\dim W=2,$ then we assume that $W$ is not of the same type as $V_i$ since otherwise we can take $V_{k+1}:=W.$

If $n=2,$ so $\hat{G}=GL_2(q),$ then $H$ lies in a maximal $\mathit{C}_3$-subgroup $M$ of $G$ and the statement follows by \cite[Theorem 1.1]{burness}.

\medskip

Assume $n>2$ and $\hat{G}$ is not orthogonal. If $n$ is even, then $H$ lies in a maximal imprimitive (stabilising the decomposition \eqref{decompW}) subgroup $M$ of $G,$ so the statement follows by \cite[Theorem 1.1]{burness}  unless $G \le GSp_4(q)$ and the statement  follows by Lemma \ref{sp4}.

Let $n \ge 3$ is odd, so $\hat{G}$ is $GL_n(q)$ or $GU_n(q).$
Let $\{v_1, \ldots, v_n\}$ be a basis (orthonormal if $\hat{G}=GU_n(q)$) such that $V_i=\langle v_{2i-1}, v_{2i} \rangle$ for $i \in \{1, \ldots, [n/2]\}$ and $W= \langle v_n \rangle.$
Let $\sigma \in \Sym(n)$ be $(1, 2, \ldots, n)$ and
$$x= \mathrm{PermMat}(\sigma) \cdot \diag({\mathrm{sgn}(\sigma), 1, \ldots, 1}) \in SL_n^{\varepsilon}(q).$$ Therefore, $H \cap H^x$ stabilises decompositions \eqref{decompW} and
$$\langle v_2, v_3\rangle \bot \langle v_4, v_5\rangle \bot \ldots \bot \langle v_{n-1}, v_{n}\rangle \bot \langle v_1 \rangle.$$

It is easy to see that $H \cap H^x$ consists of diagonal matrices, so $H \cap H^x$ is abelian. Therefore, by \cite[Theorem 1]{zen}, there exists $y \in G$ such that
$$(H \cap H^x) \cap (H \cap H^x)^y \le Z(G).$$

\medskip

Assume now that ${\hat{G}}$ is orthogonal, so $n \ge 7.$ If $\dim W=0,$ then $H$ lies in a maximal imprimitive (stabilising the decomposition \eqref{decompW}) subgroup $M$ of $G,$ so the statement follows by \cite[Theorem 1.1]{burness}.

Let $\dim W=1,$ so $n$ is odd and $\hat{G}=GO_n(q).$ Let $Q$ be the quadratic form associated with $\hat{G}$ and let $Q(v_n)=\lambda \in \mathbb{F}_q^*$ where $\langle v_n\rangle=W.$ Since $q$ is odd, $Q:V_i \to \mathbb{F}_q$ is surjective (see \cite[\S 2.5]{kleidlieb}), we can choose a basis $\beta_i=\{v_{2i-1}, v_{2i}\}$ of $V_i$ such that $Q(v_{2i-1})= \lambda$ and ${\bf f}(v_{2i-1},v_{2i})=0$ where ${\bf f}$ is the bilinear form associated with $Q.$ Let $\sigma \in \Sym(n)$ be $(1, 3, 5, \ldots,n-2, n)$ and
$$x= \mathrm{PermMat}(\sigma) \cdot \diag({\mathrm{sgn}(\sigma), 1, \ldots, 1}) \in SO_n(q).$$
Therefore, $H \cap H^x$ stabilises decompositions \eqref{decompW} and
$$\langle v_3, v_2\rangle \bot \langle v_5, v_4\rangle \bot \ldots \bot \langle v_{n}, v_{n-1}\rangle \bot \langle v_1 \rangle.$$

It is easy to see that $H \cap H^x$ consists of diagonal matrices, so $H \cap H^x$ is abelian. Therefore, by \cite[Theorem 1]{zen}, there exists $y \in G$ such that
$$(H \cap H^x) \cap (H \cap H^x)^y \le Z(G).$$

Let $\dim W=2$, so $n$ is even and $\hat{G}=GO_n^{\varepsilon}(q).$ By \cite[Lemma 2.5.12]{kleidlieb},
we can choose a basis $\beta_i=\{v_{2i-1}, v_{2i}\}$ of $V_i$ and a basis $\{v_{n-1},v_n\}$ of $W$ such that $Q(v_{2i-1})= 1$ and ${\bf f}(v_{2i-1},v_{2i})=0.$
Let $\sigma \in \Sym(n)$ be $(1, 3, 5, \ldots,n-1)$ and
$$x= \mathrm{PermMat}(\sigma) \cdot \diag({\mathrm{sgn}(\sigma), 1, \ldots, 1}) \in SO_n^{\varepsilon}(q).$$
Therefore, $H \cap H^x$ stabilises decompositions \eqref{decompW} and
$$\langle v_3, v_2\rangle \bot \langle v_5, v_4\rangle \bot \ldots \bot \langle v_{n-1}, v_{n-2}\rangle \bot \langle v_1, v_n \rangle.$$
It is easy to see that $H \cap H^x$ consists of diagonal matrices, so $H \cap H^x$ is abelian. Therefore, by \cite[Theorem 1]{zen}, there exists $y \in G$ such that
$$(H \cap H^x) \cap (H \cap H^x)^y \le Z(G).$$
\end{proof}

\begin{Rem}
Let  $\hat{G}=GL_n(q)$ and let  $H$ be as in Lemma \ref{3notinpi}.  If $n$ even, then, by \cite{james}, there almost always exists just two conjugates of $H$ whose intersection lies in $Z(\hat{G}).$ If $n \ge 5$ is odd, then one can show that $H \cap H^x \le Z(\hat{G})$ where
$$x=
\begin{pmatrix}
1&      1&       0&\ldots& 0\\
0&      1&     1  & 0     & \ldots  \\
&      & \ddots &\ddots& \\
0& \ldots&0       & 1    &1\\
1& 0     & \ldots & 0    &1
\end{pmatrix}.
$$
Using a similar technique, Baykalov in \cite{baykalov} show that, if $R$ is a solvable imprimitive subgroup in $\hat{G}=GU_n(q)$ ($GSp_n(q)$ respectively), then in almost all cases there exist $x$ and $y$ in $SU_n(q)$ ($Sp_n(q)$ respectively) such that $S \cap S^x \cap S^y \le Z(\hat{G})$.
\end{Rem}

\begin{Lem} \label{23inpi}
Let  $p \notin \pi$ and $2,3 \in \pi.$ Let $H$ be a solvable Hall $\pi$-subgroup of $$\hat{G} \in \{GL_n(q), GU_n(q), GSp_n(q), GO_n^{\varepsilon}(q)\}$$ with $n \ge 2,3,4,7$ in linear, unitary, symplectic and orthogonal cases respectively. Let $q$ be such that $\hat{G}$ is not solvable. Let $G_0=SL_n(q^{\bf u}) \cap \hat{G}.$ If $G= H \cdot G_0,$ then there exist $x,y,z \in G$ such that $$H \cap H^x \cap H^y \cap H^z\le Z(\hat{G}).$$
\end{Lem}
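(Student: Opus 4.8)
The plan is to follow the strategy of Lemma \ref{3notinpi} as closely as possible, the only genuinely new feature being that the $\pi$-group by which $H$ permutes the blocks of its natural decomposition may now contain elements of order $3$. First I would set $H_0=H\cap G_0$ and apply \cite[Theorem 5.2]{evenHall} to locate $H_0$ in the normaliser $N(G_0,T_0)$ of a maximal torus $T_0$ containing a Sylow $2$-subgroup of $G_0$. As in the preceding section this forces $\pi\cap\pi(G_0)\subseteq\pi(q-1)$ when $e(2,q)=1$ and $\pi\cap\pi(G_0)\subseteq\pi(q+1)$ when $e(2,q)=2$; in particular $3$ lies in $\pi(q-1)$ (respectively $\pi(q+1)$), so $e(3,q)=e(2,q)$ and the Sylow $3$-part of $H$ sits inside the same maximal torus, acting \emph{diagonally} on each block rather than enlarging the blocks. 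Consequently $H\le N(\hat G,T)$ for a maximal torus $T\ge T_0$ with $|N(\hat G,T)|_\pi=|\hat G|_\pi$, and by \cite[Theorem 1]{sylow2} the group $H$ stabilises a decomposition $V=V_1\bot\dots\bot V_{[k]}\bot W$ with $\dim V_i=2$ and $\dim W\in\{0,1,2\}$, exactly as in \eqref{decompW}.

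Once this decomposition is in place I would argue by cases on $n$ and on the type of $\hat G$, reproducing the case division of Lemma \ref{3notinpi}. When $H$ is contained in a maximal imprimitive subgroup $M$ stabilising the decomposition (or, for $n=2$, in a maximal $\mathit{C}_3$-subgroup), the existence of $x,y,z$ with $H\cap H^x\cap H^y\cap H^z\le Z(\hat G)$ follows from \cite[Theorem 1.1]{burness}, since the conclusion of that theorem is insensitive to whether the block-permutation group has order divisible by $3$; the sole exception is $\hat G\le GSp_4(q)$ with $2$-dimensional blocks, where Lemma \ref{sp4} supplies the three conjugates directly. For odd $n$ in the linear and unitary cases, and for the orthogonal cases with $\dim W\in\{1,2\}$, I would instead construct an explicit conjugator $x=\mathrm{PermMat}(\sigma)\cdot\diag(\mathrm{sgn}(\sigma),1,\dots,1)$ lying in $SL_n^\varepsilon(q)$ (respectively $SO_n^\varepsilon(q)$) exactly as in Lemma \ref{3notinpi}, so that $H\cap H^x$ stabilises two interlocking $2$-block systems simultaneously.

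The main obstacle is precisely this last step. In Lemma \ref{3notinpi} one concludes that $H\cap H^x$ consists of diagonal matrices because the image of $H$ in the block-permutation group is a $2$-group, whereas here that image may contain $3$-cycles. I would therefore have to verify that the common refinement of the two interlocking block systems still forces every element of $H\cap H^x$ to preserve each $1$-dimensional line $\langle v_j\rangle$, i.e.\ that no nontrivial rotation of the associated $n$-cycle on these lines is simultaneously compatible with $H$ and $H^x$. Granting this, $H\cap H^x$ is diagonal, hence abelian, and \cite[Theorem 1]{zen} yields $y$ with $(H\cap H^x)\cap(H\cap H^x)^y\le Z(\hat G)$. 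Should the order-$3$ elements obstruct diagonality for some residue of $n$ modulo a small integer, the fallback is to keep $H$ inside its imprimitive embedding and invoke \cite[Theorem 1.1]{burness} (or Lemma \ref{sp4}) for those finitely many configurations, so that in every case three conjugates beyond $H$ suffice.
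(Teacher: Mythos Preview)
Your overall strategy---obtain a block decomposition of $V$ preserved by $H$, then either quote \cite[Theorem~1.1]{burness} for the resulting imprimitive embedding or build an explicit conjugator $x$ making $H\cap H^x$ diagonal and finish with \cite[Theorem~1]{zen}---is exactly the paper's. The gap is in the structural input. You invoke \cite[Theorem~5.2]{evenHall} and deduce $\pi\cap\pi(G_0)\subseteq\pi(q\mp1)$, but that is precisely the engine of Lemma~\ref{3notinpi} under the hypothesis $3\notin\pi$; you give no reason it, or your deduction $e(3,q)=e(2,q)$, survives when $3\in\pi$. The paper instead draws on \cite[Lemmas~4.1 and~4.3]{23Hall} in the linear, unitary and symplectic cases (these do yield a decomposition of the same shape as \eqref{decompW}, after which the argument of Lemma~\ref{3notinpi} is quoted verbatim) and on \cite[Lemma~6.7]{23Hall} in the orthogonal case. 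The latter has, besides three cases (a)--(c) matching the pattern of Lemma~\ref{3notinpi}, two genuinely new configurations (d) and (e) for $n=11$ and $n=12$, where $H$ stabilises
\[
V=(V_1\bot V_2\bot V_3\bot V_4)\bot(W_1\bot W_2\bot W_3)\ \bigl(\bot\ W_4\bigr),
\]
with $\dim V_i=2$, $\dim W_j=1$, and $H$ permuting the four $V_i$ among themselves \emph{and} the three $W_j$ among themselves. This does not fit your single-remainder template $V_1\bot\cdots\bot V_k\bot W$, so your case division would miss it entirely; the paper handles it with a tailored permutation $\sigma=(1,3,5,9)(7,10)$ and then Zenkov's theorem as before.

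Conversely, what you flag as the ``main obstacle'' is not one. The diagonality argument in Lemma~\ref{3notinpi} is purely combinatorial: an element preserving both interlocking $2$-block systems and fixing the one-dimensional remainder is forced, block by block, to fix every $\langle v_j\rangle$. Nowhere is it used that the block-permutation image of $H$ is a $2$-group, so the presence of $3$-cycles is harmless there. The actual new content when $3\in\pi$ is not a subtlety in that step but the appearance of the exceptional orthogonal decompositions above, which come from the correct structural reference \cite{23Hall} rather than \cite{evenHall}.
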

\begin{proof}
Assume that $\hat{G}$ is not orthogonal. By \cite[Lemma 4.1]{23Hall}, $H$ stabilises a decomposition
$$V=V_1 \bot \ldots \bot V_k.$$
into a direct sum of pairwise orthogonal non-degenerate (arbitrary if $V$ is linear) subspaces $V_i$ where $\dim(V_i) \le 2$ for $i \in \{1,\ldots,k\}.$ If $\hat{G}=GL_n^{\varepsilon}(q)$, then, by the proof of \cite[Lemma 4.3]{23Hall}, we can assume that either $\dim V_i=1$ for all $i$ or $\dim V_i =2$ for $i<k$ and $\dim V_k \in \{1,2\}$.  If $\hat{G}=GSp_n(q)$, then $\dim V_i=2$ for all $i$ since all one-dimensional subspaces are singular in this case. The rest of the proof is as in Lemma \ref{3notinpi}.

Assume now $\hat{G}=GO_n^{\varepsilon}(q).$ Since $H$ is solvable, one of $(a)$--$(e)$ holds in \cite[Lemma 6.7]{23Hall}. In cases $(a)$--$(c)$, $H$ stabilises a decomposition of  $V$ as in Lemma \ref{3notinpi} and the proof as in Lemma \ref{3notinpi} works. In cases $(d)$ and $(e)$ we have $n=11$ and $n=12$, $H$ stabilises decompositions
$$V=(V_1 \bot V_2 \bot V_3 \bot V_4) \bot (W_1 \bot W_2 \bot W_3) $$
and
$$V=(V_1 \bot V_2 \bot V_3 \bot V_4) \bot (W_1 \bot W_2 \bot W_3) \bot W_4 $$
respectively. By that we mean that $H$ permutes $V_i$-s and $W_i$-s between and stabilises $\sum_{i=1}^4 V_i$, $\sum_{i=1}^3$ and $W_4$. Here $V_i,$ $W_i$ are non-degenerate, $\dim V_i=2$ and $\dim W_i=1.$ As in Lemma \ref{3notinpi}, we can choose the basis $\{v_1, \ldots, v_n\}$ of $V$ such that
$V_i=\langle v_{2i-1}, v_{2i} \rangle$, $W_i= \langle v_{8+i} \rangle$,
$$Q(v_1)=Q(v_3)=Q(v_5)=Q(v_7)=Q(v_9)=Q(v_{10})=Q(v_{11})=Q(v_{12})$$
and ${\bf f}(v_i,v_j)=0$ for $i \ne j$. Let Let $\sigma \in \Sym(n)$ be $(1, 3, 5,9)( 7,10)$ and
$$x= \mathrm{PermMat}(\sigma) \in SO_n^{\varepsilon}(q).$$
Therefore, $H \cap H^x$ stabilises the decomposition above  and
$$(\langle v_3, v_2\rangle \bot \langle v_5, v_4\rangle \bot \langle v_9, v_6\rangle \bot \langle v_{10}, v_8\rangle) \bot (\langle v_1\rangle \bot \langle v_7\rangle \langle v_{11}\rangle) \bot \langle v_{12}\rangle.$$
It is easy to see that $H \cap H^x$ consists of diagonal matrices, so $H \cap H^x$ is abelian. Therefore, by \cite[Theorem 1]{zen}, there exists $y \in G$ such that
$$(H \cap H^x) \cap (H \cap H^x)^y \le Z(G).$$

\end{proof}

Now Theorem \ref{Classical-main} follows by Lemmas \ref{oddorderHall}, \ref{3notinpi}, and \ref{23inpi}.

\newpage

\bibliographystyle{plain}
\bibliography{ref}

\begin{thebibliography}{10}

\bibitem{baykalov}
Anton~A. Baykalov.
\newblock {\em Intersection of conjugate solvable subgroups in classical groups
  of Lie type}.
\newblock PhD thesis, The University of Auckland, 2021 (to appear).

\bibitem{burness}
Timothy~C. Burness.
\newblock On base sizes for actions of finite classical groups.
\newblock {\em J. Lond. Math. Soc. (2)}, 75(3):545--562, 2007.

\bibitem{sylow2}
Roger Carter and Paul Fong.
\newblock The {S}ylow {$2$}-subgroups of the finite classical groups.
\newblock {\em J. Algebra}, 1:139--151, 1964.

\bibitem{Dolfi}
S.~Dolfi.
\newblock Large orbits in coprime actions of solvable groups.
\newblock {\em Trans. AMS}, 360(1):135--152, 2008.

\bibitem{gross1}
Fletcher Gross.
\newblock Conjugacy of odd order {H}all subgroups.
\newblock {\em Bull. London Math. Soc.}, 19(4):311--319, 1987.

\bibitem{gross2}
Fletcher Gross.
\newblock Odd order {H}all subgroups of the classical linear groups.
\newblock {\em Math. Z.}, 220(3):317--336, 1995.

\bibitem{thlsy}
P.~Hall.
\newblock Theorems like {S}ylow's.
\newblock {\em Proc. London Math. Soc. (3)}, 6:286--304, 1956.

\bibitem{james}
J.~P. James.
\newblock Two point stabilisers of partition actions of linear groups.
\newblock {\em J. Algebra}, 297(2):453--469, 2006.

\bibitem{kleidlieb}
Peter Kleidman and Martin Liebeck.
\newblock {\em The subgroup structure of the finite classical groups}, volume
  129 of {\em London Mathematical Society Lecture Note Series}.
\newblock Cambridge University Press, Cambridge, 1990.

\bibitem{Pass}
D.S. Passman.
\newblock Groups with normal solvable hall $p'$-subgroups.
\newblock {\em Trans.Amer.Math.Soc.}, 123(1):99--111, 1966.

\bibitem{23Hall}
D.~O. Revin and E.~P. Vdovin.
\newblock On the number of classes of conjugate {H}all subgroups in finite
  simple groups.
\newblock {\em J. Algebra}, 324(12):3614--3652, 2010.

\bibitem{evenHall}
Danila~Olegovitch Revin and Evgenii~Petrovitch Vdovin.
\newblock Hall subgroups of finite groups.
\newblock In {\em Ischia group theory 2004}, volume 402 of {\em Contemp.
  Math.}, pages 229--263. Amer. Math. Soc., Providence, RI, 2006.

\bibitem{VdovinIntersSolv}
E.~P. Vdovin.
\newblock Regular orbits of solvable linear $p'$-groups.
\newblock {\em Siberian Electronic Mathematical Reports}, 4:345--360, 2007.

\bibitem{Vdo2}
E.~P. Vdovin.
\newblock On intersections of solvable hall subgroups in finite simple
  exceptional groups of lie type.
\newblock {\em Tr. Inst. Mat. Mekh.}, 19(3):62--70, 2013.

\bibitem{oddHall}
E.~P. Vdovin and D.~O. Revin.
\newblock Hall subgroups of odd order in finite groups.
\newblock {\em Algebra Logika}, 41(1):15--56, 118, 2002.

\bibitem{VdovinZenkov}
E.~P. Vdovin and V.~I. Zenkov.
\newblock On the intersection of solvable hall subgroups in finite groups.
\newblock {\em Proc. Stekl. Inst. Math. Suppl. 3}, pages 234--243, 2009.

\bibitem{zen}
V.~I. Zenkov.
\newblock Intersections of abelian subgroups in finite groups.
\newblock {\em Mat. Zametki}, 56(2):150--152, 1994.

\bibitem{Zen1}
V.~I. Zenkov.
\newblock Intersections of nilpotent subgroups in finite groups.
\newblock {\em Fund.Prikl.Mat.}, 2(2):1--92, 1996.

\bibitem{Zen2}
V.~I. Zenkov.
\newblock On the intersections of solvable hall subgroups in finite nonsolvable
  groups.
\newblock {\em Trudy IMM}, 13(2):86--89, 2007.

\end{thebibliography}

\textsc{
\phantom{G}
\medskip \\
Anton Baykalov\\
Department of Mathematics\\
The University of Auckland\\
Private Bag 92019\\
Auckland 1142\\
New Zealand
}

a.baykalov@auckland.ac.nz
\medskip \\
\textsc{Evgeny Vdovin\footnote{The second author is supported by RFBR grant No.\ 18-01-00752}\\
Sobolev Institute of Mathematics\\
and\\
Novosibirsk State University\\
Novosibirsk 630090\\
Russia
}

vdovin@math.nsc.ru
\medskip \\
\textsc{Victor Zenkov\\
N.N. Krasovskii Institute of Mathematics and Mechanics\\
16 S.Kovalevskaya Str.\\
Ekaterinburg  620108 \\
Russia}

zenkov@imm.uran.ru


\end{document}